\newcommand{\concept}[1]{\textbf{#1}}
\newcommand{\ct}{\setminus}
\newcommand{\R}{\mathbb{R}}
\newcommand{\sph}{\mathbb{S}}
\newcommand{\Z}{\mathbb{Z}}
\newcommand{\coloneqq}{:=}
\newcommand{\id}{\textnormal{id}}
\newcommand{\Exp}{\textnormal{Exp}}
\newcommand{\ctrl}{U}
\newcommand{\dist}{\mathcal{D}}
\newcommand{\st}{X}
\newcommand{\att}{A}
\newcommand{\nbhd}{N}
\newcommand{\open}{W}
\newcommand{\na}{\nbhd\ct\att}
\newcommand{\vo}{F}
\newcommand{\vt}{G}
\newcommand{\vh}{E}
\newcommand{\vot}{\tilde{F}}
\newcommand{\vtt}{\tilde{G}}
\newcommand{\ub}{u}
\newcommand{\bas}{B}
\newcommand{\lyap}{V}
\newcommand{\lsub}{S}
\newcommand{\Hom}{H}
\newcommand{\Hc}{\check{H}}
\newcommand{\T}{T}
\spnewtheorem*{AC}{Adversary condition}{\bfseries}{\rmfamily}
\spnewtheorem*{HC}{Homology condition}{\bfseries}{\rmfamily}
\spnewtheorem*{AT}{Adversary Theorem}{\bfseries}{\rmfamily}
\spnewtheorem*{HT}{Homology Theorem}{\bfseries}{\rmfamily}
\begin{document}

\title{Relationships Between Necessary Conditions for Feedback Stabilizability}
\titlerunning{Relationships Between Necessary Conditions for Feedback Stabilizability}

\author{Matthew D. Kvalheim}

\institute{Department of Mathematics and Statistics, University of Maryland, Baltimore County, USA
\newline \texttt{kvalheim@umbc.edu}
}
%
%

\maketitle


\begin{abstract}
The author's extensions of Brockett's and Coron's necessary conditions for stabilizability are shown to be independent in the fiber bundle picture of control, but the latter is shown to be stronger in the vector bundle picture if the state space is orientable and the \v{C}ech-Euler characteristic of the set to be stabilized is nonzero.
\end{abstract}

\section{Introduction}\label{sec:intro}

Consider a control system 
\begin{equation}\label{eq:cont-sys-intro}
\dot{x} = f(x,u)
\end{equation}
depending on a control parameter $u$, where $x$ lives in a smooth manifold $\st$.
Given a compact subset $\att \subset \st$, the \concept{feedback stabilizability problem} is to determine if there exists a feedback law $x\mapsto u(x)$ of suitable regularity such that $\att$ is an asymptotically stable invariant set for the closed-loop vector field $x\mapsto f(x,u(x))$.

Brockett introduced a technique for proving that such a system is \emph{not} stabilizable for the classical case that $\att$ is a point and $\st = \R^n$.
Viewing $f(x,u)$ as $\R^n$-valued, \concept{Brockett's condition} is satisfied if the equation 
\begin{equation}
v = f(x,u)
\end{equation}
admits a solution $(x,u)$ for each constant vector  $v\in \R^n$ with $\|v\|$ close enough to zero.
Brockett proved that this condition is necessarily satisfied if there exists a continuously differentiable closed-loop vector field solving the feedback stabilizability problem \cite[Thm~1.(iii)]{brockett1983asymptotic}.
In fact, Zabczyk showed that Brockett's condition must hold under the relaxed assumption that the closed-loop vector field is merely continuous and has unique maximal trajectories \cite{zabczyk1989some}.\footnote{In fact, only uniqueness of forward trajectories was assumed in \cite{zabczyk1989some}, but uniqueness of backward trajectories will also be assumed herein, as was done in \cite{kvalheim2022necessary,kvalheim2023obstructions}.}

A monograph by Krasnosel'ski\u{i} and Zabre{i}ko, appearing around the same time as Brockett's paper, contains a necessary condition for asymptotic stability of an equilibrium point of a vector field involving its index \cite[\S 52]{krasnoselskii1984geometrical}.
This condition was used by Coron to introduce a necessary condition for essentially the same setting as Brockett's with $n>1$.
Defining $\Sigma$ to be the complement of $f^{-1}(0)$ within the domain of $f$, \concept{Coron's condition} is satisfied if the $f$-induced homomorphism
\begin{equation}\label{eq:homomorph-intro}
f_*\colon \Hom_{n-1}(\Sigma)\to \Hom_{n-1}(\R^n\ct \{0\})\cong \Z
\end{equation}
of singular homology groups is  surjective. 
Coron proved that this condition must hold if $\att$ is stabilizable in the relaxed sense above  \cite[Thm~2]{coron1990necessary}.
He also proved that Brockett's condition holds whenever Coron's does, and that sometimes Brockett's condition holds while Coron's does not \cite{coron1990necessary}.
Thus, Coron's condition yields a more informative technique for establishing non-stabilizability (but Brockett's condition can be simpler to test in examples).

Subsequently, Mansouri extended Coron's condition to the case that $\att$ is a compact connected smoothly embedded submanifold of $\R^n$, with Euler characteristic denoted by $\chi(\att)\in \Z$.
\concept{Mansouri's condition} is satisfied if the image of \eqref{eq:homomorph-intro} contains the subgroup $\chi(\att)\Z$ if $\dim \att < n-1$, and the subgroup $\chi(M_\att)\Z$ if $\dim A = n-1$ with $M_\att$ the compact domain bounded by $\att$. 
This condition extends Coron's since a point has Euler characteristic equal to $1$.
Mansouri proved that his condition must hold if $\att$ is stabilizable in the same relaxed sense \cite[Thm~4]{mansouri2007local}, \cite[Thm~2.3, Rem.~2.2.1]{mansouri2010topological}.\footnote{Using the local nature of asymptotic stability, Coron \cite{coron1990necessary} and Mansouri \cite{mansouri2007local,mansouri2010topological} actually formulated sharper conditions involving certain subsets of $\Sigma$, and Brockett's condition admits a similar sharpening \cite{zabczyk1989some}. 
Such sharpenings are present in the results of \S \ref{sec:cont-sys-feed-stab}, but do not appear in this introduction for simplicity.}
However, this condition gives no information if $\chi(\att)=0$ or $\chi(M_\att) = 0$, since it then asserts only that the image of \eqref{eq:cont-sys-intro} contains the zero subgroup (as it always does).

Robotics and other modern applications lead to control systems on non-Euclidean state spaces $\st\neq \R^n$ \cite{leve2024future}. 
Moreover, the somewhat-dual problem of determining whether a ``safe'' feedback law exists often leads to stabilizability problems for which $\att$ is not even a manifold.
Motivated by these considerations, the author and Koditschek extended Brockett's condition to one that is necessary for stabilizability of an arbitrary compact subset $\att$ of a smooth manifold $\st$, assuming that $\chi(\att)\neq 0$ \cite[Thm~3.2]{kvalheim2022necessary}.
Here $\chi(\att)$ coincides with the usual Euler characteristic if, e.g., $\att$ is a  manifold or admits a triangulation, but a general definition accounting for all cases is given in \S \ref{sec:prelim}.
Subsequently, the author extended Coron's and Mansouri's conditions to one that is necessary for stabilizability of any compact subset $\att$ of a smooth manifold $\st$, without any Euler characteristic assumptions \cite[Thm~2]{kvalheim2023obstructions}.

The purpose of the present document is to clarify the relationship between these extensions.
As mentioned, Coron showed that his condition is stronger than Brockett's.
This leads one to expect that the extension of Coron's and Mansouri's conditions is stronger than the extension of Brockett's.
Perhaps surprisingly, it is shown (Example~\ref{ex:indep}) that this is not the case for control systems in the ``fiber bundle picture'' introduced by Brockett to account for the situation that the admissible control inputs depend on the state \cite[p.~15]{brockett1977control}, \cite[\S 4.6]{bloch2015nonholonomic}.
In this picture the combined space of states and controls is not globally a product space, and ``$(x,u)$'' in \eqref{eq:cont-sys-intro} is a local representation of a point in a fiber bundle over $\st$.
However, if this fiber bundle is a vector bundle and $\st$ is orientable, it is shown (Theorem~\ref{th:vb-hom-implies-adv}) that the extension of Coron's and Mansouri's conditions is indeed stronger than the extension of Brockett's when $\chi(\att)\neq 0$.
This is the main result herein.

After preliminaries in \S \ref{sec:prelim}, a class of control systems extending the fiber bundle picture is defined in \S\ref{sec:cont-sys-feed-stab}.
The extended necessary conditions are reviewed in \S \ref{sec:nec-cond}, and the relationship between them is clarified in \S \ref{sec:relationships}.

A recent survey of all necessary conditions discussed above is \cite[Ch.~6]{jongeneel2023topological}. 
It should be noted that exponential \cite{gupta2018linear}, global \cite{byrnes2008brockett,baryshnikov2023topological}, time-varying \cite{coron1992global}, and discontinuous \cite{clarke1997asymptotic} variants of the feedback stabilizability problem are related topics of interest, but they are not considered herein. 

\section{Preliminaries}\label{sec:prelim}

The tangent space of a smooth manifold $\st$ at $x\in \st$ is denoted by $\T_x \st$, and the tangent bundle of $\st$ is denoted by $\T \st = \bigcup_{x\in \st}\T_x \st$.
Given a continuous map $\pi\colon \ctrl\to \st$, $\ctrl_x\coloneqq \pi^{-1}(x)$ denotes the fiber over $x\in \st$.
Given a subset $Y\subset \st$, the notations $\T_Y\st \coloneqq \bigcup_{x\in Y}\T_x \st$ and $\ctrl_Y\coloneqq \bigcup_{x\in Y}\ctrl_x$ are in effect.

A continuous vector field $\vt$ on $\st$ will be called \concept{uniquely integrable} if it has unique maximal trajectories.
By the Picard-Lindel\"{o}f theorem, $\vt$ is automatically uniquely integrable if $\vt$ is locally Lipschitz (e.g., smooth).

A subset $\att\subset \st$ is \concept{invariant} for such a $\vt$ if each (forward and backward) $\vt$-trajectory initialized in $\att$ is contained in $\att$.
A compact invariant set $\att\subset \st $ is \concept{asymptotically stable} for such a $\vt$ if for every open set $\open_1 \supset \att$ there is a smaller open set $\open_2\supset \att$ such that every maximal forward $\vt$-trajectory that starts in $\open_2$ stays in $\open_1$ and converges to $\att$.
In this case, a \concept{proper smooth Lyapunov function} always exists on the set of initial conditions converging to $\att$, namely, the \concept{basin of attraction} $\bas$ of $\att$ for $\vt$ \cite[Thm~3.2]{wilson1969smooth}, \cite[\S 6]{fathi2019smoothing}.
By definition, this is a smooth function $\lyap\colon \bas\to [0,\infty)$ with compact sublevel sets such that $\att = \lyap^{-1}(0)$ and $d\lyap \cdot \vt < 0$ on $\bas \ct \att$.

Given an integer $k\geq 0$, $\Hc^k(Z;\R)$ denotes the $k$-th real \concept{\v{C}ech(-Alexander-Spanier) cohomology} of a topological space $Z$ \cite{spanier1966algebraic,dold1972lectures, massey1978homology,bredon1993topology}.
Brief introductions that are more than adequate for the results herein are \cite[\S 3]{gobbino2001topological}, \cite[pp.~371--375]{massey1991basic}.
However, the only facts needed are described below.
First, $\Hc^k(Z;\R)$ is a real vector space.
If it is finite-dimensional for each $k$ and zero for all but finitely many $k$, then the \concept{\v{C}ech-Euler characteristic}
\begin{equation}\label{eq:cech-euler}
\chi(Z)\coloneqq \sum_{k=0}^\infty (-1)^k \dim \Hc^k(Z;\R) 
\end{equation}
is a well-defined integer.

The importance of the \v{C}ech theory for stability theory is as follows.
If a compact invariant set $\att\subset \st$ is asymptotically stable for a uniquely integrable continuous vector field $\vt$ with basin of attraction $\bas$, then the \v{C}ech cohomology of $\att$ is isomorphic to that of $\bas$ \cite[p.~28]{shub1974dynamical}, \cite[Rem.~2.3(b)]{hastings1979higher}, \cite[Thm~6.3]{gobbino2001topological}.
Moreover, if $\lyap\colon \bas\to [0,\infty)$ is a proper smooth Lyapunov function, these \v{C}ech cohomologies are also isomorphic to that of any sublevel set $\lsub\coloneqq \lyap^{-1}([0,c])$ \cite[Prop.~1]{kvalheim2022necessary}, so they are finite-dimensional for each $k$ and vanish for all but finitely many $k$ as above.
Thus, the \v{C}ech-Euler characteristics $\chi(\att)=\chi(\bas) = \chi(\lsub)$ are well-defined and equal.
This is important because the singular homology of $\att$ is not generally isomorphic to that of $\bas$ or $\lsub$, so the usual Euler characteristics need not coincide, as illustrated by the example of the ``Warsaw circle'' \cite[Ex.~3.3]{hastings1979higher} \cite[Ex.~III.8]{jongeneel2023cofibration}.
In particular, $\att$ is not homotopy equivalent to $\bas$ in general, although it is if, e.g., $\att$ is a smoothly embedded submanifold of $\st$ \cite[Thm~III.6]{jongeneel2023cofibration}.
In this case, or more generally if $\att$ is a polyhedron or CW complex, $\chi(\att)$ coincides with the usual Euler characteristic.
In particular, given a triangulation of $\att$, $\chi(\att)$ equals the usual alternating sum $$\#(\text{vertices})-\#(\text{edges})+\#(\text{faces})-\ldots$$ 
An advantage of \v{C}ech cohomology in stability theory is that it enables clean statements valid for \emph{any} compact $\att\subset \st$  without further assumptions.

\section{Control Systems and Feedback Stabilizability}\label{sec:cont-sys-feed-stab}
The framework for the feedback stabilizability problem herein is the following.

A \concept{state space} is a smooth manifold $\st$, a \concept{control space} is a topological space $\ctrl$ equipped with a continuous map $\pi\colon \ctrl \to \st$, and a \concept{control system} is a continuous map $f\colon \ctrl\to \T \st$ that sends each fiber $\ctrl_x= \pi^{-1}(x)$ into $\T_x \st$.

A section of $\pi$ is a map $\ub\colon \st\to \ctrl$ satisfying $\pi\circ \ub = \id_\st$.
A \concept{feedback law} is a continuous section $\ub$ of $\pi$ such that the \concept{closed-loop vector field} $f\circ \ub$ is uniquely integrable.
A compact set $\att\subset \st$ is \concept{stabilizable} if there is a feedback law $\ub$ such that $\att$ is an asymptotically stable invariant set for $f\circ \ub$.

\begin{remark}\label{rem:cont-sys-framework}
The definition of control systems here reduces to the fiber bundle picture of control in the special case that $\pi\colon \ctrl \to \st$ is a fiber bundle and $f\colon \ctrl \to \T \st$ is sufficiently smooth \cite[p.~15]{brockett1977control}, \cite[\S 4.6]{bloch2015nonholonomic}.
If $\pi$ is a trivial (product) bundle $\pi \colon \ctrl = \st \times Y\to \st$, then elements of $\ctrl$ are of the form $(x,u)$, so the expression ``$f(x,u)$'' makes sense.
Since elements of $\ctrl$ are not of this form in the general framework here, it seems appropriate to instead write ``$f(\ub)$'', keeping in mind that $\ub\in \ctrl$ contains all information about the state $x=\pi(\ub)$.
This is why the notation $f\circ \ub$ is employed here for feedback laws $\ub\colon \st\to \ctrl$.
\end{remark}

\section{Necessary Conditions for Feedback Stabilizability}\label{sec:nec-cond} 
Fix a control system $f\colon \ctrl \to \T \st$ and compact set $\att \subset \st$ that is an asymptotically stable invariant set for \emph{some} uniquely integrable continuous vector field $\vt$ on $\st$, so that $\chi(\att)$ is well-defined (\S\ref{sec:prelim}).
Note that $\vt$ has nothing to do with $f$.

An ``adversary condition'' and a ``homology condition'' are introduced below.
These conditions do not always hold, but Theorems~\ref{th:gen-brockett}, \ref{th:homology} show that they must hold if $\att$ is stabilizable (assuming also that $\chi(\att)\neq 0$ for Theorem~\ref{th:gen-brockett}).

The first condition extends Brockett's.

\begin{AC}
For any neighborhood $\nbhd_\att \subset \st$ of $\att$ there is a neighborhood $\nbhd_0 \subset \T \st$ of the zero section such that, for any $\nbhd_0$-valued continuous vector field $\vo$ over $\nbhd_\att$, $\vo(x)=f(\ub_x)$ for some $x\in \nbhd_\att$,  $\ub_x\in \ctrl_x$.
\end{AC}

The terminology is explained by interpreting this as the ability to ``\concept{defeat}'' \concept{adversaries} $\vo$ via the image of $f$ somewhere intersecting the image of $\vo$ \cite{kvalheim2022necessary}.

\begin{remark}\label{rem:brock-specialization}
The adversary condition implies Brockett's on taking $\st = \R^n$, $\pi\colon \ctrl = \R^n \times \R^m\to \R^n$ to be a trivial bundle, $\att$ to be a point, and adversaries $\vo$ to be constant.
If instead adversaries are not required to be constant, one obtains a condition that can fail even when Brockett's does not \cite[\S 6.1]{kvalheim2022necessary}.
\end{remark}

The following theorem is due to the author and Koditschek \cite[Thm~3.2]{kvalheim2022necessary}.

\begin{theorem}\label{th:gen-brockett}
If $\chi(\att)\neq 0$ and $\att$ is stabilizable, the adversary condition holds.
\end{theorem}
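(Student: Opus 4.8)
The plan is to reduce the adversary condition to a degree-theoretic fact. Since the image of the closed-loop vector field $\vh\coloneqq f\circ\ub$ is contained in the image of $f$, it suffices to show that every sufficiently small adversary $\vo$ must coincide with $\vh$ at some point of $\nbhd_\att$, and this will follow from the nonvanishing of the index of $\vh$ near the attractor, which is exactly where the hypothesis $\chi(\att)\neq 0$ enters. So I would start by assuming $\att$ is stabilizable, hence asymptotically stable for the uniquely integrable continuous vector field $\vh=f\circ\ub$ of some feedback law $\ub$. By \S\ref{sec:prelim} there is a proper smooth Lyapunov function $\lyap\colon\bas\to[0,\infty)$ on the basin $\bas$ with $\att=\lyap^{-1}(0)$ and $d\lyap\cdot\vh<0$ on $\bas\setminus\att$, and $\chi(\lsub)=\chi(\bas)=\chi(\att)$ for each sublevel set $\lsub\coloneqq\lyap^{-1}([0,c])$.

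Fix a Riemannian metric on $\st$. Given a neighborhood $\nbhd_\att$ of $\att$, I would use Sard's theorem together with properness of $\lyap$ to choose a regular value $c>0$ small enough that the compact sublevel set $\lsub$ lies in $\nbhd_\att$; then $\lsub$ is a smooth compact manifold with boundary $\partial\lsub=\lyap^{-1}(c)$, on which $d\lyap\cdot\vh<0$ forces $\vh$ to be nowhere zero and $-\vh$ to point outward. Set $\delta\coloneqq\min_{\partial\lsub}|\vh|>0$ and take $\nbhd_0\coloneqq\{v\in\T\st:|v|<\delta\}$, a neighborhood of the zero section.

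The core step is an index computation. For any continuous vector field on $\lsub$ that is nonvanishing on $\partial\lsub$ there is a well-defined integer index $\ind(\cdot\,,\lsub)$ -- obtained by approximating with a $C^0$-close smooth field having only isolated zeros and summing local indices, independence of the choices and homotopy invariance being standard -- and the Poincar\'e-Hopf theorem for manifolds with boundary gives $\ind(-\vh,\lsub)=\chi(\lsub)$, since $-\vh$ points outward along $\partial\lsub$. Now let $\vo$ be any $\nbhd_0$-valued continuous vector field over $\nbhd_\att$; on the compact set $\partial\lsub$ one has $|\vo|<\delta\leq|\vh|$, so the straight-line homotopy $h_t\coloneqq-\vh+t\vo$, $t\in[0,1]$, satisfies $|h_t|\geq|\vh|-|\vo|>0$ there, whence $\ind(-\vh+\vo,\lsub)=\ind(-\vh,\lsub)=\chi(\lsub)=\chi(\att)\neq 0$. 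Therefore $-\vh+\vo$ vanishes at some $x\in\operatorname{int}\lsub\subset\nbhd_\att$, i.e.\ $\vo(x)=\vh(x)=f(\ub(x))$, which is the adversary condition with $\ub_x\coloneqq\ub(x)\in\ctrl_x$.

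The step I expect to be the main obstacle is making rigorous the index of the \emph{merely continuous} closed-loop field $\vh$ on the manifold-with-boundary $\lsub$ and the accompanying boundary version of Poincar\'e-Hopf, i.e.\ the identity $\ind(-\vh,\lsub)=\chi(\lsub)$; once this is secured, the Sard-theoretic choice of $c$, the homotopy argument, and the identification $\chi(\lsub)=\chi(\att)$ from \S\ref{sec:prelim} are routine. A variant through the fixed-point index of the time-$\tau$ flow of $\vh$ (using asymptotic stability to compress $\lsub$ toward $\att$) is conceivable but appears to entangle matters with the \v{C}ech-theoretic subtleties of \S\ref{sec:prelim} without genuine simplification.
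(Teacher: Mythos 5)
Your proposal is correct and follows essentially the same route as the paper: restrict to a compact Lyapunov sublevel set $\lsub\subset\nbhd_\att$ with $\chi(\lsub)=\chi(\att)\neq 0$, note the closed-loop field is nonvanishing (and transverse) on $\partial\lsub$, and apply the Poincar\'e--Hopf theorem to the perturbed field $f\circ\ub-\vo$ to produce a zero, hence a point where $\vo(x)=f(\ub(x))$. The only cosmetic differences are that you phrase the perturbation step via homotopy invariance of the index rather than by shrinking $\nbhd_0$ until $f\circ\ub-\vo$ still points inward, and the technical issue you flag (Poincar\'e--Hopf for merely continuous fields) is exactly what the paper's citation of Pugh's generalized Poincar\'e--Hopf theorem is for.
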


See Figure~\ref{fig:adversary-theorem} for an illustration of the proof.

\begin{proof}
Fix a neighborhood $\nbhd_\att\subset \st$ of $\att$.
Assume that $\chi(\att)\neq 0$ and $\att$ is stabilizable.
Let $\ub\colon \st\to \ctrl$ be a stabilizing feedback law, so that $\att$ is asymptotically stable for the uniquely integrable continuous vector field $f\circ \ub$.
Let $\lyap$ be a proper smooth Lyapunov function for $\att$ with respect to $f\circ \ub$ and $c>0$ be small enough that the compact smooth manifold with boundary $\lsub\coloneqq \lyap^{-1}([0,c])\subset \nbhd_\att$.
Since  $f\circ \ub$ points into the interior of $\lsub$ at its boundary, so does $f\circ\ub - \vo$ for any continuous vector field $\vo$ over $\nbhd_\att$ taking values in a sufficiently small neighborhood $\nbhd_0\subset \T \st$ of the zero section. 
Since also $\chi(\lsub) = \chi(\att)\neq 0$ (\S \ref{sec:prelim}), the Poincar\'{e}-Hopf  theorem implies that $f\circ \ub(x) - \vo(x) = 0$ for some $x\in \st$ \cite{milnor1965topology,pugh1968generalized,guillemin1974differential}.
Defining $\ub_x\coloneqq \ub(x)$ finishes the proof.
\end{proof}

\begin{figure}
	\centering
   	\def\svgwidth{0.65\columnwidth}
\begingroup%
  \makeatletter%
  \providecommand\color[2][]{%
    \errmessage{(Inkscape) Color is used for the text in Inkscape, but the package 'color.sty' is not loaded}%
    \renewcommand\color[2][]{}%
  }%
  \providecommand\transparent[1]{%
    \errmessage{(Inkscape) Transparency is used (non-zero) for the text in Inkscape, but the package 'transparent.sty' is not loaded}%
    \renewcommand\transparent[1]{}%
  }%
  \providecommand\rotatebox[2]{#2}%
  \newcommand*\fsize{\dimexpr\f@size pt\relax}%
  \newcommand*\lineheight[1]{\fontsize{\fsize}{#1\fsize}\selectfont}%
  \ifx\svgwidth\undefined%
    \setlength{\unitlength}{218.02372356bp}%
    \ifx\svgscale\undefined%
      \relax%
    \else%
      \setlength{\unitlength}{\unitlength * \real{\svgscale}}%
    \fi%
  \else%
    \setlength{\unitlength}{\svgwidth}%
  \fi%
  \global\let\svgwidth\undefined%
  \global\let\svgscale\undefined%
  \makeatother%
  \begin{picture}(1,0.79628066)%
    \lineheight{1}%
    \setlength\tabcolsep{0pt}%
    \put(0,0){\includegraphics[width=\unitlength,page=1]{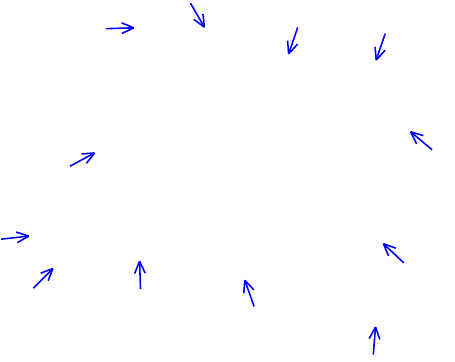}}%
    \put(0.45310083,0.27818989){\makebox(0,0)[lt]{\lineheight{1.25}\smash{\begin{tabular}[t]{l}$\att$\end{tabular}}}}%
    \put(0.46686111,0.59466921){\makebox(0,0)[lt]{\lineheight{1.25}\smash{\begin{tabular}[t]{l}$\lsub$\end{tabular}}}}%
    \put(0.06782183,0.29883026){\color[rgb]{0,0,1}\makebox(0,0)[lt]{\lineheight{1.25}\smash{\begin{tabular}[t]{l}$f\circ \ub$\end{tabular}}}}%
    \put(0,0){\includegraphics[width=\unitlength,page=2]{gen-brockett-proof.pdf}}%
    \put(0.06164889,0.21936868){\color[rgb]{0,0.50196078,0}\makebox(0,0)[lt]{\lineheight{1.25}\smash{\begin{tabular}[t]{l}$f\circ u - \vo$\end{tabular}}}}%
    \put(0,0){\includegraphics[width=\unitlength,page=3]{gen-brockett-proof.pdf}}%
  \end{picture}%
\endgroup%
	
	\caption{An illustration of the proof of Theorem~\ref{th:gen-brockett}.}\label{fig:adversary-theorem}
\end{figure}

\begin{example}\label{ex:gen-brockett}
Assume there are $\dist \subset \T\st$ and a continuous vector field $\vo$ such that, for each $x\in \st$: $\dist_x\subset \T_x \st$ is a vector subspace of positive codimension, $f(\ctrl_x)\subset \dist_x$, and $\vo(x)\not \in \dist_x$.\footnote{If $\dist \subset \T \st$ is a vector subbundle of positive corank, then a continuous vector field $\vo$ satisfying $\vo(x)\not\in \dist_x$ for all $x\in \st$ exists if and only if the quotient bundle $\T\st /\ctrl$ admits a nowhere-zero section \cite[Thm~4.2.2]{hirsch1976differential}, which is always the case if $\st$ is contractible (e.g., $\st = \R^n$), since then  $\T \st/\ctrl$ is trivializable \cite[Cor.~4.2.5]{hirsch1976differential}.\label{foot:normal-vector-bundle-section}}
Then $\vo_\varepsilon(x)\coloneqq \varepsilon\vo(x)\not\in \dist_x$ for each $x\in \st$ and $\varepsilon \neq 0$, so the adversary condition does not hold.
Hence $\chi(\att)=0$ if $\att$ is stabilizable (Theorem~\ref{th:gen-brockett}).
In particular, if $\st=\R^3$ or $\st = \sph^1\times \R^2$, then any stabilizable compact connected embedded submanifold $\att$ is homeomorphic to a circle or a torus\footnote{This is because $\sph^1\times \R^2$ embeds in $\R^3$, and the only compact connected embedded submanifolds $\att$ of $\R^3$ with $\chi(\att)=0$ are topological circles and tori \cite[pp.~9, 32--33]{massey1991basic}.}---in contrast, Brockett's theorem only implies that $\att$ is not a point \cite[Thm~1.(iii)]{brockett1983asymptotic}.
Special cases of the present example include the \emph{Heisenberg system} (or \emph{nonholonomic integrator}) \cite{brockett1983asymptotic,bloch2015nonholonomic} with $\st = \R^3$ and the standard \emph{kinematic unicycle} model from robotics \cite{choset2005principles,vasilopoulos2022reactive} with $\st = \sph^1\times \R^2$.
\end{example}

The next condition extends Coron's and Mansouri's.
Given a subset $Y\subset \st$, the notation $\Sigma_{Y}\coloneqq \ctrl_{Y} \ct f^{-1}(0)$ will be used from here on.
\begin{HC}
For all small enough neighborhoods $\nbhd\subset \st$ of $\att$, 
\begin{equation}\label{eq:HC}
\vt_*\Hom_{\bullet}(\na) \subset f_*\Hom_{\bullet}(\Sigma_{\na}) \textnormal{ in } \Hom_{\bullet}(\T_{\na}\st \ct 0_{\T \st}),
\end{equation}
where $\vt_*$, $f_*$ are the induced graded homomorphisms on singular homology.
\end{HC}

\begin{remark}\label{rem:c-m-specialization}
The homology condition implies Mansouri's upon taking $\st = \R^n$, $\pi\colon \ctrl = \R^n \times \R^m\to \R^n$ to be a trivial bundle, $\att$ to be a compact connected smoothly embedded submanifold, and adversaries $\vo$ to be constant.
Indeed, let $p_2\colon \T \R^n \approx \R^n \times \R^n \to \R^n$ be the projection onto the second factor and note that, if $\nbhd$ is small enough that $\vt|_{\na}$ is nowhere zero, \eqref{eq:HC} implies 
\begin{equation}\label{eq:rem-c-m-specialization}
(p_2\circ \vt)_*\Hom_{n-1}(\na) \subset (p_2\circ f)_*\Hom_{n-1}(\Sigma_{\na}) \textnormal{ in } \Hom_{n-1}(\R^n \ct \{0\}).
\end{equation}
Since $\R^n\ct \{0\}$ deformation retracts onto the unit sphere $\sph^{n-1}\subset \R^n$, degree theory \cite[p.~28]{milnor1965topology} and the Poincar\'{e}-Hopf theorem applied to a sublevel set $\lsub\subset \nbhd$ of a proper smooth Lyapunov function for $\vt$ imply that the left side of \eqref{eq:rem-c-m-specialization} contains $\chi(\att)\Z$ if $\dim \att < n-1$.
If instead $\dim \att = n-1$, then the unique compact domain $M_\att$ bounded by $\att$ \cite[p.~89]{guillemin1974differential} is also asymptotically stable for $\vt$, so a similar argument shows that the left side of \eqref{eq:rem-c-m-specialization} contains $\chi(M_\att)\Z$.
Thus, Mansouri's condition holds if the homology condition holds.
When $\att\subset \R^n$ is a point and $n>2$, Mansouri's condition specializes to Coron's except that the latter contains a refinement made possible by the fact that the closed-loop vector field vanishes at the equilibrium $\att$.
The technique used to prove Theorem~\ref{th:homology} below can also be used to prove Coron's theorem that this refined condition is necessary for stabilizability.
\end{remark}

The proof that the homology condition is necessary for stabilizability relies on the following \concept{homotopy theorem} \cite[Thm~1]{kvalheim2023obstructions}. 
Figure~\ref{fig:homotopy-theorem} illustrates the basic idea of the proof, which is not given here.
See the proof in \cite{kvalheim2023obstructions} for details. 

\begin{theorem}\label{th:homotopy}
If $\att$ is an asymptotically stable invariant set for a uniquely integrable continuous vector field $\vo$ in addition to $\vt$, there is an open set $\open\supset \att$ such that $\vo|_{\open \ct \att}$ and $\vt|_{\open \ct \att}$ are homotopic through nowhere-zero vector fields.
\end{theorem}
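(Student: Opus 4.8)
The plan is to replace $\vo$ and $\vt$ by negative gradients of Lyapunov functions, and then to compare these two gradient fields by obstruction theory on a small punctured neighborhood of $\att$. Assuming $\dim\st=n\geq 2$ (the case $n=1$ being elementary), I would fix a Riemannian metric $g$ on $\st$ and, using the smoothing results recalled in \S\ref{sec:prelim}, take proper smooth Lyapunov functions $\lyap$ for $\vt$ and $\tilde{\lyap}$ for $\vo$ on the respective basins of attraction of $\att$, so that $\att=\lyap^{-1}(0)=\tilde{\lyap}^{-1}(0)$ and $d\lyap\cdot\vt<0$, $d\tilde{\lyap}\cdot\vo<0$ off $\att$. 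For $x$ off $\att$ the vectors $\vt(x)$ and $-\nabla_g\lyap(x)$ both lie in the convex open half-space $\{v\in\T_x\st : d\lyap_x\cdot v<0\}$ (forcing $\nabla_g\lyap(x)\neq 0$), so the fiberwise straight-line homotopy exhibits $\vt\simeq-\nabla_g\lyap$ through nowhere-zero vector fields on the punctured basin; likewise $\vo\simeq-\nabla_g\tilde{\lyap}$. Since $\lyap$ and $\tilde{\lyap}$ have no critical points off $\att$ on their domains, it therefore suffices to produce an open $\open\supset\att$ on which $-\nabla_g\lyap$ and $-\nabla_g\tilde{\lyap}$ are homotopic through nowhere-zero vector fields.

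To localize, I would choose a regular value $c>0$ of $\lyap$ small enough that the compact set $\lyap^{-1}([0,c])$ lies in the domain of $\tilde{\lyap}$, put $\open\coloneqq\lyap^{-1}([0,c))$, and fix a regular value $c'\in(0,c)$. Since $\lyap$ has no critical point in $\open\setminus\att=\lyap^{-1}((0,c))$, its normalized gradient flow yields a deformation retraction of $\open\setminus\att$ onto the closed $(n-1)$-manifold $\partial\lsub$, where $\lsub\coloneqq\lyap^{-1}([0,c'])$. Hence a nowhere-zero vector field on $\open\setminus\att$ is, up to homotopy, determined by its restriction to $\partial\lsub$, that is, by a section of the bundle of unit tangent vectors of $\st$ along $\partial\lsub$, whose fiber is $S^{n-1}$. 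Because $\pi_k(S^{n-1})=0$ for $k<n-1$ while $\dim\partial\lsub=n-1$, two such sections are homotopic if and only if their primary difference class in $H^{n-1}(\partial\lsub;\mathbb{Z}_w)$ vanishes, where $\mathbb{Z}_w$ denotes $\mathbb{Z}$ twisted by $w_1(\T\st)$ (trivial if $\st$ is orientable near $\att$). Thus the theorem reduces to showing that the primary difference class $d$ of the sections $-\nabla_g\lyap|_{\partial\lsub}$ and $-\nabla_g\tilde{\lyap}|_{\partial\lsub}$ is zero.

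To see $d=0$, I would combine Poincar\'{e} duality on $\partial\lsub$ with the Poincar\'{e}-Hopf theorem. The class $d$ is detected by its evaluations against the (twisted) fundamental classes of the components of $\partial\lsub$, and for each component such an evaluation computes the difference of two relative Euler numbers of $\T\st$ over a chain filling that component: one framed along the component by $-\nabla_g\lyap$, the other by $-\nabla_g\tilde{\lyap}$. Taking the fillings inside the Lyapunov functions' domains, each relative Euler number counts with signs the zeros of the corresponding gradient field in the filling, all of which lie in $\att$; and applying Poincar\'{e}-Hopf instead to an isolating sublevel-set block of the relevant Lyapunov function — on whose boundary that function's negative gradient points strictly inward, and which deformation retracts onto $\att$ — shows that each of these counts equals $(-1)^n\chi(\att)$. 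Hence the two relative Euler numbers coincide, $d=0$, and $\vt|_{\open\setminus\att}\simeq-\nabla_g\lyap|_{\open\setminus\att}\simeq-\nabla_g\tilde{\lyap}|_{\open\setminus\att}\simeq\vo|_{\open\setminus\att}$, completing the proof. The step I expect to be the genuine obstacle is this last one: making the ``detection by fundamental classes'' and ``filling computes a relative Euler number'' arguments precise when $\att$ is intricately embedded in $\st$ — so that the components of $\partial\lsub$ need not bound conveniently within the domains on which the Lyapunov functions are defined, forcing one to pair instead with cycles realized by level sets of \emph{both} functions — and when $\st$ is non-orientable, so that twisted coefficients and their mod-$2$ reductions must be tracked. (The tempting shortcut of the straight-line homotopy $-\nabla_g\big((1-s)\lyap+s\tilde{\lyap}\big)$ fails in general, since $\nabla_g\lyap$ and $\nabla_g\tilde{\lyap}$ can be antiparallel arbitrarily close to $\att$ when $\lyap$ or $\tilde{\lyap}$ degenerates along $\att$; genuinely global input is needed.)
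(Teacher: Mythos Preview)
The paper does not prove Theorem~\ref{th:homotopy}. Immediately before stating it, the text says the result is \cite[Thm~1]{kvalheim2023obstructions} and is ``not proved herein.'' There is therefore no proof in this paper for your proposal to be compared against.

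Regarding the proposal on its own terms: the first three reductions---homotoping $\vt$ and $\vo$ to the negative gradients of their respective Lyapunov functions via the half-space straight-line homotopy, retracting $\open\setminus\att$ onto a regular level set $\partial\lsub$, and invoking obstruction theory to reduce to a single difference class $d\in H^{n-1}(\partial\lsub;\mathbb{Z}_w)$---are sound. The remaining step, showing $d=0$, is exactly where you say you expect trouble, and the sketch you give there is not yet a proof. Your assertion that the evaluation of $d$ on the fundamental class of \emph{each} component of $\partial\lsub$ is a difference of two quantities both equal to $(-1)^n\chi(\att)$ is not correct as stated: Poincar\'{e}--Hopf applied to a full sublevel set yields $(-1)^n\chi(\att)$ only as the \emph{sum} over all components of $\partial\lsub$, and an individual component need not bound a region isolating $\att$, nor one that is simultaneously compatible with the sublevel-set structure of the other Lyapunov function. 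So the proposal is a reasonable plan with the hard part left open, essentially as you yourself diagnose in your final paragraph.
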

\begin{figure}
	\centering
   	\def\svgwidth{1.0\columnwidth}
   	\import{figs/}{homotopy-theorem-proof.pdf_tex}		
	\caption{An illustration of the idea of proof of the homotopy theorem (Theorem~\ref{th:homotopy}).
	Using a level set $M$ of a proper smooth Lyapunov function, the complement $\bas\setminus \att$ of $\att$ in its basin of attraction $\bas$ for $\vt$ is identified with $M\times \R$ via a diffeomorphism identifying $\vt$-trajectories with straight lines $t\mapsto (m,t)$ converging to $\att$ as $t\to\infty$ \cite[Thm~3.2]{wilson1967structure}.
	The product of any Riemannian metric on $M$ with the Euclidean metric on $\R$ induces an exponential map $\Exp\colon \T(\bas \setminus \att)\to (\bas\setminus \att)^2$ that restricts to a diffeomorphism from a neighborhood (comprising a family of ``tubes'', as on the right of the figure) of $\textnormal{span}(\vt)\approx \T \R\times 0_{\T M}$ onto its image.
	The inverse $\Exp^{-1}$ is used to convert the motion of $\vo$- and $\vt$-trajectories starting near $\att$ (left side of the figure) to homotopies of $\vo$ and $\vt$ through nowhere-zero vector fields to a common vector field.
	To account for the problem of $\vo$-trajectories leaving the domain of $\Exp^{-1}$, and to ``finish off'' the homotopies to a common vector field, the $\Exp^{-1}$-converted $\vo$-trajectories are smoothly interpolated toward those of $\vt$ (i) as $\vo$-trajectories leave the domain and (ii) after a sufficiently large time. 	
	(In the proof given in \cite[Thm~1]{kvalheim2023obstructions}, a homotopy from $\vo$ to $\vt$ is constructed directly; the alternative proof sketched here using homotopies to a common third vector field was chosen for ease of illustration.)
	}\label{fig:homotopy-theorem}
\end{figure}

The following theorem is a slight extension (to non-open $\nbhd$) of \cite[Thm~2]{kvalheim2023obstructions}.

\begin{theorem}\label{th:homology}
If $\att$ is stabilizable, then the homology condition holds.
\end{theorem}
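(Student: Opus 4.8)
The plan is to obtain the homology condition by feeding the closed-loop vector field into the homotopy theorem (Theorem~\ref{th:homotopy}) and then factoring the resulting homology map through $f$. Suppose $\att$ is stabilizable, let $\ub\colon\st\to\ctrl$ be a stabilizing feedback law, and set $\vo\coloneqq f\circ\ub$. By hypothesis $\vo$ is a uniquely integrable continuous vector field for which $\att$ is asymptotically stable, so Theorem~\ref{th:homotopy} provides an open set $\open\supset\att$ on which $\vo|_{\open\ct\att}$ and $\vt|_{\open\ct\att}$ are homotopic through nowhere-zero vector fields. I would also bring in the basin of attraction $\bas\supset\att$ of $\vo$ and note that $\vo$ has no zeros on $\bas\ct\att$: by uniqueness of forward trajectories, a zero of $\vo$ lying in $\bas$ would be a constant trajectory converging to $\att$, hence would lie in $\att$.

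Next I would fix any neighborhood $\nbhd$ of $\att$ with $\nbhd\subset\open\cap\bas$ and show that \eqref{eq:HC} holds for it; since every small enough neighborhood of $\att$ is contained in $\open\cap\bas$, this proves the theorem. For such a $\nbhd$ we have $\na\subset\open\ct\att$, so the homotopy furnished by Theorem~\ref{th:homotopy} restricts to a homotopy over $\na$. The point to exploit is that a nowhere-zero vector field over $\na$ is precisely a continuous map $\na\to\T_{\na}\st\ct 0_{\T\st}$ covering the identity, and a homotopy through such vector fields is a homotopy of maps into $\T_{\na}\st\ct 0_{\T\st}$; hence, by homotopy invariance of singular homology, $\vt_* = \vo_*$ as homomorphisms $\Hom_\bullet(\na)\to\Hom_\bullet(\T_{\na}\st\ct 0_{\T\st})$.

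Finally I would factor $\vo_*$. Because $\nbhd\subset\bas$, the closed-loop field $\vo=f\circ\ub$ is nowhere zero on $\na$, which says exactly that $\ub(\na)\subset\Sigma_{\na}$; thus $\ub$ restricts to a continuous map $\na\to\Sigma_{\na}$, while $f$ restricts to a continuous map $\Sigma_{\na}\to\T_{\na}\st\ct 0_{\T\st}$ by the definitions of $\Sigma_{\na}$ and of a control system. Therefore $\vo|_{\na} = f|_{\Sigma_{\na}}\circ\ub|_{\na}$, so by functoriality $\vo_*\Hom_\bullet(\na) = f_*\bigl(\ub_*\Hom_\bullet(\na)\bigr)\subset f_*\Hom_\bullet(\Sigma_{\na})$ inside $\Hom_\bullet(\T_{\na}\st\ct 0_{\T\st})$; combining this with $\vt_*=\vo_*$ from the previous paragraph gives $\vt_*\Hom_\bullet(\na)\subset f_*\Hom_\bullet(\Sigma_{\na})$, which is \eqref{eq:HC}.

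The substantive content is entirely in Theorem~\ref{th:homotopy}, which is assumed; the remainder is bookkeeping with the fibered structure. The one step I would treat with care is the identification of ``homotopy through nowhere-zero vector fields over $\na$'' with ``homotopy of maps into $\T_{\na}\st\ct 0_{\T\st}$'', together with the check that every map in sight — $\vt|_{\na}$, $\vo|_{\na}$, $f|_{\Sigma_{\na}}$, and the connecting homotopy — really takes values in $\T_{\na}\st\ct 0_{\T\st}$ and not merely in the larger punctured bundle $\T_{\open\ct\att}\st\ct 0_{\T\st}$, so that the containment is asserted in the group named in \eqref{eq:HC}. I do not expect genuine difficulty there once the bundle structure is unwound.
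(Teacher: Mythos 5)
Your proof is correct and follows essentially the same route as the paper: apply Theorem~\ref{th:homotopy} to $\vo = f\circ\ub$, restrict the homotopy to $\na$ to get $\vt_* = \vo_*$ into $\Hom_\bullet(\T_{\na}\st\ct 0_{\T\st})$, and then factor $\vo_* = f_*\circ\ub_*$ through $\Sigma_{\na}$. Your extra care in checking that $\ub(\na)\subset\Sigma_{\na}$ (via the basin-of-attraction argument, or equivalently via the nowhere-vanishing of $\vo|_{\open\ct\att}$ already guaranteed by Theorem~\ref{th:homotopy}) is a detail the paper leaves implicit, but it is the same argument.
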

\begin{remark}
Theorem~\ref{th:homology} remains true (by the same proof) for homology with coefficients in an abelian group, or with homotopy groups instead of homology.
\end{remark}

\begin{proof}
Let $\ub\colon \st\to \ctrl$ be a stabilizing feedback law, so that $\att$ is asymptotically stable for the uniquely integrable continuous vector field $f\circ \ub$.
The homotopy theorem applied to $\vo = f\circ \ub$ implies the existence of an open set $\open\supset \att$ such that, for any neighborhood $\nbhd \subset \open$ of $\att$,  $\vo|_{\na}$ and $\vt|_{\na}$ are homotopic through nowhere-zero vector fields over $\na$ (restrict the homotopy of Theorem~\ref{th:homotopy}).
Thus, 
$\vo_*\Hom_{\bullet}(\na) = \vt_*\Hom_{\bullet}(\na) \textnormal{ in } \Hom_{\bullet}(\T_{\na}\st \ct 0_{\T \st}).$
Since $\vo_*\Hom_{\bullet}(\na) = f_*\circ \ub_* \Hom_{\bullet}(\na)$, this yields \eqref{eq:HC} and finishes the proof.
\end{proof}

It is interesting to compare the assumptions of the following example with those of Example~\ref{ex:gen-brockett}.

\begin{example}\label{ex:2}
Assume $\st\ct \att$ is orientable and there are $\dist \subset \T\st$ and a continuous vector field $\vo$ such that, for each $x\in \st\ct \att$: $\dist_x\subset \T_x \st$ is a vector subspace of positive codimension, $f(\ctrl_x)\subset \dist_x$, and $\vo(x)\not \in \dist_x$.
Since $\att$ is asymptotically stable for $\vt$, there is an arbitrarily small compact neighborhood $\nbhd$ of $\att$ such that $\vt|_{\na}$ is nowhere-zero, $\partial \nbhd$ is a smoothly embedded hypersurface, and $\na$ is diffeomorphic to $(0,1]\times \partial \nbhd$ \cite[Thm~3.2]{wilson1967structure}. With $n\coloneqq\dim \st$ and $\{1\} \times \partial \nbhd$ identified with $\partial \nbhd\subset \na$, assume 
\begin{equation}\label{eq:ex-2-1}
\vt_*([M]) \not = \vo_*([M]) \textnormal{ in } \Hom_{n-1}(\T_{M}\st \ct 0_{\T \st})
\end{equation}
for some connected component $M$ of $\partial \nbhd$, where $[M]\in \Hom_{n-1}(M)$ is the fundamental class \cite[p.~355]{bredon1993topology} of $M$.

Then $\att$ is not stabilizable (Theorem~\ref{th:homology}).
Indeed, \eqref{eq:ex-2-1} implies that
\begin{equation*}
\vt_*\Hom_{n-1}(\partial \nbhd) \not \subset \vo_*\Hom_{n-1}(\partial \nbhd) \textnormal{ in } \Hom_{n-1}(\T_{\partial \nbhd}\st \ct 0_{\T \st})
\end{equation*}
since $\Hom_{n-1}(M)\cong \Z[M]$, $\vo$ and $\vt$ are vector fields, and the homology of a space is the direct sum of the homology of its connected components (see the proof of Lemma~\ref{lem:hv} in \S\ref{sec:relationships} for further related details).
Since $\nbhd\setminus \att$ deformation retracts onto $\partial \nbhd$, homotopy invariance of homology \cite[Cor.~IV.16.5]{bredon1993topology} yields
$$\vt_*\Hom_{n-1}(\na) \not \subset \vo_*\Hom_{n-1}(\na) \textnormal{ in } \Hom_{n-1}(\T_{\na}\st \ct 0_{\T \st}),$$
which in turn implies that
\begin{equation}\label{eq:ex-2-a}
\vt_*\Hom_{n-1}(\na) \not \subset (\vo\circ \pi)_*\Hom_{n-1}(\Sigma_{\na}) \textnormal{ in } \Hom_{n-1}(\T_{\na}\st \ct 0_{\T \st})
\end{equation}
since $(\vo\circ \pi)_*\Hom_{n-1}(\Sigma_{\na})=\vo_*\pi_*\Hom_{n-1}(\Sigma_{\na})\subset \vo_*\Hom_{n-1}(\na)$.
And since $f|_{\Sigma_{\na}}$, $\vo\circ \pi|_{\Sigma_{\na}}$ are straight-line homotopic as maps into $\T_{\na}\st \ct 0_{\T \st}$, homotopy invariance and \eqref{eq:ex-2-a} imply that
\begin{equation*}
\vt_*\Hom_{n-1}(\na) \not \subset f_*\Hom_{n-1}(\Sigma_{\na}) \textnormal{ in } \Hom_{n-1}(\T_{\na}\st \ct 0_{\T \st}).
\end{equation*}
Thus, the homology condition does not hold since $\nbhd$ was arbitrarily small, so Theorem~\ref{th:homology} establishes the claim that $\att$ is not stabilizable. 
\end{example} 

\begin{example}
Let all assumptions be as in Example~\ref{ex:2}, but further assume that $\st = \R^n$ and $\partial \nbhd$ is connected (such an $\nbhd$ always exists if, e.g.,  $\att$ is a smoothly embedded submanifold of codimension $\geq 2$ \cite[Thm~3.2, Cor.~3.5]{wilson1967structure}), and instead of \eqref{eq:ex-2-1} assume that
\begin{equation}\label{eq:ex-3-1}
 \deg\left(\frac{p_2\circ\vo}{\|p_2\circ\vo\|}|_{\partial \nbhd}\right)\neq \chi(\att).
\end{equation}
Here $p_2\colon \T \R^n \approx \R^n \times \R^n \to \R^n$ is the projection onto the second factor, and $\deg(\vh)$ is the Brouwer degree\footnote{After replacing $\vh$ by a homotopic smooth map \cite[p.~70]{guillemin1974differential}, $\deg(\vh)$ can be defined concretely as a signed count of points in the preimage $\vh^{-1}(y)$ for a ``typical'' $y\in \sph^{n-1}$ \cite[p.~27]{milnor1965topology}, \cite[pp.~108--109]{guillemin1974differential}.} of a continuous map $\vh\colon \partial \nbhd\to \sph^{n-1}$.

Then \eqref{eq:ex-2-1} holds (with $M=\partial \nbhd$), so $\att$ is not stabilizable (by Example~\ref{ex:2}).

Indeed, $\att$ is asymptotically stable for a smooth vector field $\tilde{\vt}$ that points inward on $\partial \nbhd$ \cite[p.~327]{wilson1967structure}, so (after shrinking $\nbhd$ if necessary) the homotopy theorem implies $\tilde{\vt}|_{\na}, \vt|_{\na}$ are homotopic through nowhere-zero vector fields.
This, degree theory \cite[p.~28]{milnor1965topology}, and the Poincar\'{e}-Hopf theorem imply
\begin{equation}\label{eq:ex-3-2}
 \deg\left(\frac{p_2\circ\vt}{\|p_2\circ\vt\|}|_{\partial \nbhd}\right)=  \deg\left(\frac{p_2\circ\tilde{\vt}}{\|p_2\circ\tilde{\vt}\|}|_{\partial \nbhd}\right) = \chi(\att).
\end{equation}
Since $\vh_*([\partial \nbhd])$ is $\deg(\vh/\|\vh\|)$ times the canonical generator of $\Hom_{n-1}(\R^n\setminus \{0\})\cong \Z$ for any continuous map $\vh\colon \partial \nbhd\to \R^{n}\setminus \{0\}$, \eqref{eq:ex-3-1} and \eqref{eq:ex-3-2} yield $$(p_2)_*\circ \vt_*([\partial \nbhd])\neq (p_2)_*\circ \vo_*([\partial \nbhd]) \textnormal{ in } \Hom_{n-1}(\R^n\setminus \{0\}),$$
which in turn establishes \eqref{eq:ex-2-1} and hence non-stabilizability of $\att$, as claimed.
\end{example}

\section{Relationships between the conditions}\label{sec:relationships}

The main result is motivated by the following example, which shows that the homology condition does not imply the adversary condition in general.

\begin{example}\label{ex:indep}
Let $\st = \R^2$, $\pi \colon \ctrl \to \st$ be the circle bundle of unit-norm tangent vectors $\ctrl \subset \T \st$, $f\colon \ctrl \hookrightarrow \T \st$ be the inclusion, and $\att = \{0\}$ be the origin.
Any continuous vector field $\vt$ that asymptotically stabilizes $\att$ does not vanish on $\na$ for sufficiently small neighborhoods $\nbhd\subset \st$ of $\att$, so is straight-line homotopic over $\na$ through nowhere-zero vector fields to $\vt/\|\vt\|$.
Thus,
 $$\vt_*\Hom_{\bullet}(\na) = (\vt/\|\vt\|)_*\Hom_{\bullet}(\na) \subset f_*\Hom_{\bullet}(\Sigma_{\na}) \textnormal{ in } \Hom_\bullet(\T_{\na}\st \ct 0_{\T \st}),$$ 
where the set inclusion holds since $G/\|G\| = f\circ \ub$ for $\ub = G/\|G\|$.
Hence the homology condition holds, but the adversary condition does not since the images of $f$ and any adversary $\vo$ with $\|\vo\|<1$ are disjoint.
\end{example}

The homology condition does not imply the adversary condition in Example~\ref{ex:indep} even though $\st$ is orientable, $\chi(\att)\neq 0$, and $\pi\colon\ctrl \to \st$ is a fiber bundle.
But Theorem~\ref{th:vb-hom-implies-adv} below shows that this implication does hold if additionally $\pi\colon \ctrl \to \st$ is a vector bundle.

\begin{lemma}\label{lem:hv}
Let $\lsub$ be a compact oriented smooth $n$-dimensional manifold with nonempty boundary $\partial \lsub$ satisfying $\chi(\lsub)\neq 0$, and $\vo$, $\vt$ be continuous vector fields on $\lsub$ such that $\vo$ is nowhere-zero and $\vt$ points inward at $\partial \lsub$. 
Then 
\begin{equation}\label{eq:lem-hv}
\vt_* \Hom_{n-1}(\partial \lsub)\not \subset \vo_* \Hom_{n-1}(\partial \lsub) \textnormal{ in } \Hom_{n-1}(\T_{\partial \lsub}\lsub\ct 0_{\T \lsub}).
\end{equation}
\end{lemma}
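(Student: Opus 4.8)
The plan is to argue by contraposition. Write $B\coloneqq\partial\lsub$ and $E\coloneqq\T_{B}\lsub\setminus 0_{\T\lsub}$, with bundle projection $p\colon E\to B$; note that $B$ is a nonempty closed orientable $(n-1)$-manifold (orientability inherited from $\lsub$), and that $\vo|_{B}$ and $\vt|_{B}$ are sections of $p$, since $\vo$ is nowhere-zero and $\vt$, pointing inward, is nowhere-zero on $B$. The first step is a reduction: since $p\circ(\vo|_{B})=p\circ(\vt|_{B})=\id_{B}$, applying $p_*$ gives $p_*\vo_*([B])=p_*\vt_*([B])=[B]$ in $\Hom_{n-1}(B)$, so if $\vt_*([B])=k\vo_*([B])$ then $[B]=k[B]$; as $\Hom_{n-1}(B)\cong\mathbb{Z}^{c}$ with $c\ge 1$ the number of components and $[B]$ the all-ones vector, this forces $k=1$. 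Hence it suffices to prove $\vt_*([B])\ne\vo_*([B])$.

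Assume for contradiction that $\vt_*([B])=\vo_*([B])$; we may take $n\ge 2$, the case $n=1$ being immediate since then the assumed homology equality already forces $\vo$ to point inward at $\partial\lsub$, after which one proceeds as in the last paragraph. After normalizing with an auxiliary metric, $E$ deformation retracts fiberwise onto the oriented $(n-1)$-sphere bundle $SE\to B$ (oriented because $\T\lsub$ is, $\lsub$ being orientable), and $\vo|_{B},\vt|_{B}$ become sections $s_0,s_1$ of $SE$. Because $\dim B=n-1$ and $\pi_k(S^{n-1})=0$ for $1\le k\le n-2$, obstruction theory provides a single difference class $d(s_0,s_1)\in\Hom^{n-1}(B;\mathbb{Z})$ that vanishes if and only if $s_0\simeq s_1$ through sections. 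The point is that this class is detected by the hypothesis: working over a component $B_i$ and making $s_0,s_1$ agree on an $(n-2)$-skeleton, their discrepancy on a top cell is a self-map of a fiber $S^{n-1}$ of degree $\langle d(s_0,s_1),[B_i]\rangle$, so $\vt_*([B_i])-\vo_*([B_i])=\langle d(s_0,s_1),[B_i]\rangle\,[\mathrm{fib}_i]$ in $\Hom_{n-1}(SE|_{B_i})$, where $[\mathrm{fib}_i]$ is the image of the fundamental class of a fiber. The Gysin sequence of $SE|_{B_i}\to B_i$ together with $\Hom_{n}(B_i)=0$ forces $[\mathrm{fib}_i]$ to have infinite order, so the assumption $\vt_*([B])=\vo_*([B])$ makes every $\langle d(s_0,s_1),[B_i]\rangle$ vanish; by Poincar\'e duality on the closed oriented $B$ this gives $d(s_0,s_1)=0$, i.e.\ $\vt|_{B}\simeq\vo|_{B}$ through nowhere-zero sections of $\T_{B}\lsub$ over $B$.

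Finally I convert this into a Poincar\'e--Hopf contradiction. The inward vectors at a point of $B$ form a convex cone, so $\vt|_{B}$ is homotopic through inward (hence nowhere-zero) sections to the inward unit normal $\nu$; composing homotopies, $\vo|_{B}\simeq\nu$ through nowhere-zero sections. Using a collar $B\times[0,1]\hookrightarrow\lsub$, over which $\T\lsub$ is identified with the pullback of $\T_{B}\lsub$, I spread this homotopy along the collar coordinate to obtain a continuous vector field $\vo'$ on $\lsub$ that agrees with $\vo$ off the collar, is nowhere-zero throughout, and equals $\nu$ along $B$; a $C^0$-small smoothing keeps all three properties. Then $-\vo'$ is a smooth nowhere-zero vector field pointing outward along $\partial\lsub$, so the Poincar\'e--Hopf theorem for manifolds with boundary gives $\chi(\lsub)=\sum_{\text{zeros of }-\vo'}\ind=0$, contradicting $\chi(\lsub)\ne 0$. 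The main obstacle is the middle paragraph --- matching the obstruction class $d(s_0,s_1)$ to the induced maps on $\Hom_{n-1}$ via the cellular difference construction, the Gysin sequence, and Poincar\'e duality on $B$; the collar surgery is routine but must be arranged to preserve non-vanishing.
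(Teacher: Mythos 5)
Your proof is correct, but it takes a genuinely different route from the paper's. The paper works entirely with oriented intersection numbers: after smoothing and normalizing to $\vot=\vo/\|\vo\|$ and $\vtt=\vt/\|\vt\|$, it straightens $\vot$ near the finitely many zeros of $\vt$, computes the intersection number $I(-\vot,\vtt)$ over small spheres around those zeros to be $\chi(\lsub)$ by Poincar\'e--Hopf, transfers this to $\partial\lsub$ by cobordism invariance, and notes that $\vt_*([\partial\lsub])\in\mathbb{Z}\vo_*([\partial\lsub])$ would force $I(-\vot|_{\partial\lsub},\vtt|_{\partial\lsub})\in\mathbb{Z}\cdot I(-\vot|_{\partial\lsub},\vot|_{\partial\lsub})=\{0\}$, a contradiction. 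You instead first reduce to the coefficient $k=1$ via the bundle projection (a clean observation the paper does not need), then use obstruction theory --- the single difference class in $\Hom^{n-1}(\partial\lsub;\mathbb{Z})$, detected in $\Hom_{n-1}$ of the sphere bundle via the fiber class (infinite order because $\Hom_n$ of each boundary component vanishes) and killed by Poincar\'e duality --- to upgrade the homological equality to a vertical homotopy $\vo|_{\partial\lsub}\simeq\vt|_{\partial\lsub}\simeq\nu$, and finally modify $\vo$ in a collar to produce a nowhere-zero boundary-transverse field, contradicting $\chi(\lsub)\neq 0$ by Poincar\'e--Hopf. Both arguments bottom out in Poincar\'e--Hopf; yours invokes heavier machinery (obstruction theory, the Thom/Gysin sequence, duality, and the collar surgery) but buys the stronger intermediate fact that equality of the two classes forces the sections to be vertically homotopic, whereas the paper's intersection-number computation is shorter and closer to the cited references. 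The steps you flag as delicate --- the cellular identity $s_{1*}[B_i]-s_{0*}[B_i]=\langle d(s_0,s_1),[B_i]\rangle[\mathrm{fib}_i]$, the infinite order of the fiber class, and the collar modification preserving non-vanishing --- are all standard and go through as you describe, including your separate treatment of $n=1$.
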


\begin{proof}
Fix a Riemannian metric on $\lsub$ with induced norm $\|\cdot\|$.
After homotopies of $\vo$, $\vt$ through vector fields not vanishing on $\partial \lsub$ it may be assumed that $\vo$, $\vt$ are smooth and $\vt^{-1}(0)$ is finite and disjoint from $\partial \lsub$ \cite[p.~135]{hirsch1976differential}.
Define $\vot\coloneqq \vo/\|\vo\|$ and $\vtt\coloneqq \vt/\|\vt\|$.

By the vector field straightening theorem \cite[p.~62]{bloch2015nonholonomic} there are disjoint open neighborhoods $\open_x$ of $x\in \vt^{-1}(0)$ and charts $\open_x \to \R^n$ pushing forward $\vot$ to constant unit vector fields on $\R^n$.
Let $\{B_x\}_{x\in \vt^{-1}(0)}$ consist of disjoint closed Euclidean balls $B_x\subset \open_x$ centered at $x$,  and define $B\coloneqq \bigcup_{x\in \vt^{-1}(0)}B_x$.

Constancy of $\vot$ in each chart reveals that the oriented intersection number $I(-\vot|_{\partial B},\vtt|_{\partial B})$ \cite{guillemin1974differential,hirsch1976differential} is equal to the sum of the indices of zeros of $\vt$, so
\begin{equation*}
I(-\vot|_{\partial B},\vtt|_{\partial B}) = \chi(\lsub)\neq 0
\end{equation*}
by the Poincar\'{e}-Hopf theorem \cite{milnor1965topology, guillemin1974differential,hirsch1976differential}.
Since also $\partial B \cup \partial \lsub = \partial (\lsub\ct \textnormal{int}(B))$, 
\begin{equation}\label{eq:oin-boundary}
I(-\vot|_{\partial \lsub},\vtt|_{\partial \lsub}) =  I(-\vot|_{\partial B},\vtt|_{\partial B}) \neq 0.
\end{equation} 

Now suppose that  \eqref{eq:lem-hv} does not hold.
Then 
\begin{equation*}
\vtt_*\Hom_{n-1}(\partial \lsub)\subset \vot_*\Hom_{n-1}(\partial \lsub) \textnormal{ in } \Hom_{n-1}(\T^1_{\partial \lsub}\lsub)\cong \bigoplus_{M\in \pi_0(\partial \lsub)} \Hom_{n-1}(\T^1_M\lsub),
\end{equation*}
where $\T^1_Z\lsub \subset \T_Z \lsub$ denotes the unit sphere bundle over a subset $Z\subset \lsub$ and $\pi_0(Z)$ denotes the set of connected components of $Z$.
Since $$\Hom_{n-1}(\partial \lsub)\cong \bigoplus_{M\in \pi_0(\partial \lsub)} \Hom_{n-1}(M)$$ and $\vot_*, \vtt_*\colon \Hom_{n-1}(\partial \lsub)\to \Hom_{n-1}(\T^1_{\partial \lsub}\lsub)$ are the direct sums of their restrictions $\Hom_{n-1}(M)\to \Hom_{n-1}(\T^1_{M}\lsub)$ \cite[p.~184]{bredon1993topology}, then for each $M\in \pi_0(\partial \lsub)$:
\begin{equation}\label{eq:lem-hv-2}
\vtt_*\Hom_{n-1}(M)\subset \vot_*\Hom_{n-1}(M) \textnormal{ in } \Hom_{n-1}(\T^1_{M}\lsub).
\end{equation}
Let $[M]\in \Hom_{n-1}(M)$ be the fundamental class of $M\in \pi_0(\partial \lsub)$ with orientation induced by that of $\lsub$ \cite[p.~355]{bredon1993topology}.
Since $\Hom_{n-1}(M)\cong \Z[M]$ and the homology homomorphism induced by the bundle projection $\T^1_M\lsub \to M$ sends both $\vot_*([M])$ and $\vtt_*([M])$ to $[M]$, \eqref{eq:lem-hv-2} implies that, for each $M\in \pi_0(\partial \lsub)$:
\begin{equation*}
\vtt_*([M])=\vot_*([M]) \textnormal{ in } \Hom_{n-1}(\T^1_{M}\lsub),
\end{equation*}
which in turn implies $I(-\vot|_{M},\vtt|_{M})=I(-\vot|_{M},\vot|_{M})=0$ \cite[\S VI.11]{bredon1993topology}.
Hence
\begin{align*}
0&= \sum_{M\in \pi_0(\partial \lsub)} I(-\vot|_{M},\vot|_{M})
= \sum_{M\in \pi_0(\partial \lsub)} I(-\vot|_{M},\vtt|_{M})\\
&= I(-\vot|_{\partial \lsub},\vtt|_{\partial \lsub}),
\end{align*}
but this contradicts \eqref{eq:oin-boundary} and completes the proof.
\end{proof}

As in \S \ref{sec:nec-cond}, the main result below concerns a control system $f\colon \ctrl \to \T \st$ (\S \ref{sec:cont-sys-feed-stab}) with smooth $n$-dimensional state space $\st$ and a compact set $\att \subset \st$ that is an asymptotically stable invariant set for \emph{some} uniquely integrable continuous vector field $\vt$ on $\st$, so that $\chi(\att)$ is well-defined (\S\ref{sec:prelim}).

\begin{theorem}\label{th:vb-hom-implies-adv}
Assume that $\pi\colon \ctrl \to \st$ is a vector bundle, $\st$ is orientable, and $\chi(\att) \neq 0$.
If the homology condition holds, then the adversary condition holds.
\end{theorem}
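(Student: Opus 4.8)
The plan is to suppose the homology condition holds but, for contradiction, that the adversary condition fails, and to reach a contradiction with Lemma~\ref{lem:hv}. Fix a Riemannian metric on $\st$ with norm $\|\cdot\|$; the failure of the adversary condition provides a neighborhood $\nbhd_\att$ of $\att$ such that for \emph{every} neighborhood $\nbhd_0\subset\T\st$ of the zero section there is a $\nbhd_0$-valued continuous vector field over $\nbhd_\att$ whose image misses $f(\ctrl_{\nbhd_\att})$. Let $\lyap\colon\bas\to[0,\infty)$ be a proper smooth Lyapunov function for $\vt$, and pick $0<c<c'$ small enough that $\nbhd\coloneqq\lyap^{-1}([0,c'))\subset\nbhd_\att$, the homology condition holds for $\nbhd$, and $\lsub\coloneqq\lyap^{-1}([0,c])$ is a compact orientable smooth $n$-manifold with nonempty boundary $\partial\lsub=\lyap^{-1}(c)$. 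Then $\chi(\lsub)=\chi(\att)\neq0$, $\vt$ is nowhere zero on $\na=\lyap^{-1}((0,c'))$ and points inward at $\partial\lsub$, and $\partial\lsub$ is a deformation retract of $\na$. Applying the homology condition in degree $n-1$ to the image of $[\partial\lsub]$ in $\Hom_{n-1}(\na)$ yields a class $\alpha\in\Hom_{n-1}(\Sigma_{\na})$ with $f_*(\alpha)=\vt_*([\partial\lsub])$ in $\Hom_{n-1}(\T_{\na}\st\ct 0_{\T\st})$; represent $\alpha$ by a singular cycle $z$ with compact support $K\subset\Sigma_{\na}$ and set $\delta\coloneqq\min_K\|f\|>0$.

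Next I use the vector bundle hypothesis. Feeding $\nbhd_0=\{v\in\T\st:\|v\|<\delta/2\}$ to the fixed $\nbhd_\att$ produces a continuous vector field $\vo$ over $\nbhd_\att$ with $\|\vo\|<\delta/2$ and $f(\ub)\neq\vo(\pi(\ub))$ for all $\ub\in\ctrl_{\nbhd_\att}$. With $0_\ctrl$ the zero section and $\vh\coloneqq f\circ 0_\ctrl$, define $g\colon\ctrl_{\nbhd_\att}\to\T_{\nbhd_\att}\st\ct 0_{\T\st}$ by $g(\ub)=f(\ub)-\vo(\pi(\ub))$; this is nowhere zero by the choice of $\vo$, and its restriction to $0_\ctrl$ is the nowhere-zero continuous vector field $\vh-\vo$ on $\nbhd_\att$, hence on $\lsub$. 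Two homotopies are available, both valued in $\T\st\ct 0_{\T\st}$: fiberwise scaling $g(t\ub)=f(t\ub)-\vo(\pi(\ub))$ (nonzero since $\pi(t\ub)=\pi(\ub)$) shows $g\simeq(\vh-\vo)\circ\pi$; and on $K$ the straight-line homotopy $f-t\,\vo\circ\pi$ has norm $\geq\delta-\delta/2>0$, so $f|_K\simeq g|_K$. Combining, with $\iota\colon\Sigma_{\na}\hookrightarrow\ctrl_{\na}$ and $\pi\colon\ctrl_{\na}\to\na$,
\[
\vt_*([\partial\lsub])=f_*(\alpha)=[f\circ z]=[g\circ z]=(\vh-\vo)_*(\beta),\qquad\beta\coloneqq\pi_*\iota_*\alpha\in\Hom_{n-1}(\na).
\]

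Finally I transport this identity to $\partial\lsub$. Since $\partial\lsub$ is a deformation retract of $\na$, the inclusion $j\colon\partial\lsub\hookrightarrow\na$ induces an isomorphism on $\Hom_{n-1}$, and likewise the inclusion $\T_{\partial\lsub}\lsub\ct 0_{\T\lsub}\hookrightarrow\T_{\na}\st\ct 0_{\T\st}$; under these the identity above becomes $\vt_*([\partial\lsub])=(\vh-\vo)_*(\beta')$ in $\Hom_{n-1}(\T_{\partial\lsub}\lsub\ct 0_{\T\lsub})$, where $j_*\beta'=\beta$ and $\vt,\vh-\vo$ are restricted to $\partial\lsub$. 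Pushing both sides forward along the bundle projection $\T_{\partial\lsub}\lsub\ct 0_{\T\lsub}\to\partial\lsub$, of which both $\vt|_{\partial\lsub}$ and $(\vh-\vo)|_{\partial\lsub}$ are sections, forces $\beta'=[\partial\lsub]$, so $\vt_*([\partial\lsub])=(\vh-\vo)_*([\partial\lsub])$. But $\vh-\vo$ is nowhere zero on $\lsub$, $\chi(\lsub)\neq0$, and $\vt$ points inward at $\partial\lsub$, so Lemma~\ref{lem:hv} gives $\vt_*([\partial\lsub])\notin\mathbb{Z}(\vh-\vo)_*([\partial\lsub])$, and in particular $\vt_*([\partial\lsub])\neq(\vh-\vo)_*([\partial\lsub])$---a contradiction.

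I expect the main obstacle to be the replacement of $f$ by $g=f-\vo\circ\pi$: because $\|f\|$ may be arbitrarily small on $\Sigma_{\na}$, the maps $f$ and $g$ need not be homotopic over all of $\Sigma_{\na}$ into $\T_{\na}\st\ct 0_{\T\st}$, so the homotopy must be confined to a compact cycle representing $\alpha$ and $\vo$ chosen small relative to $\min_K\|f\|$---which is why $\alpha$ must be fixed before $\vo$. The vector bundle hypothesis enters precisely here, supplying both the zero section that defines $\vh$ and the fiberwise scaling that retracts $g$ onto $(\vh-\vo)\circ\pi$; neither is available for the circle bundle of Example~\ref{ex:indep}, which is why the homology condition can hold there while the adversary condition fails.
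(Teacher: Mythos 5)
Your proof is correct, and its skeleton is the paper's: argue by contradiction from the failure of the adversary condition, use the vector bundle structure to scale fiberwise onto the zero section so that the adversary-perturbed $f$ becomes, up to homotopy through nowhere-zero fiber-preserving maps, a nowhere-zero vector field composed with $\pi$, and conclude with Lemma~\ref{lem:hv}. Three local choices differ and are worth recording. (i) You apply the homology condition to the single class $\vt_*([\partial \lsub])$ and take $K$ to be the automatically compact support of a representing singular cycle, fixing $\delta=\min_K\|f\|>0$ \emph{before} summoning the adversary; the paper instead shows $\Hom_{n-1}(\T_{\lsub\setminus\att}\st\ct 0_{\T\st})$ is finitely generated and extracts a compact $K$ carrying the entire subgroup $f_*\Hom_{n-1}(\Sigma_{\lsub\setminus\att})$. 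Yours is the more elementary route to the same positive $\delta$. (ii) You reconcile $f|_K$ with $g=f-\vo\circ\pi$ via the straight-line homotopy $f-t\,\vo\circ\pi$, which stays nonzero on $K$ because $\|\vo\|<\delta/2$ while $\|f\|\geq\delta$ there; the paper instead cuts off the perturbation with a bump function so that the modified map agrees with $f$ on $K$ exactly. These are interchangeable. (iii) By pushing forward along the bundle projection $\T_{\partial\lsub}\lsub\ct 0_{\T\lsub}\to\partial\lsub$, of which both restricted vector fields are sections, you pin down $\beta'=[\partial\lsub]$ and arrive at the exact equality $\vt_*([\partial\lsub])=(\vh-\vo)_*([\partial\lsub])$, contradicting Lemma~\ref{lem:hv} with coefficient $1$; the paper instead works with the containment of $f_*\Hom_{n-1}(\Sigma_{\lsub\setminus\att})$ in $\mathbb{Z}\vo_*([\partial\lsub])$. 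Your closing diagnosis is also accurate: the quantifier order (cycle first, adversary second) and the two uses of the linear structure (the zero section and the fiberwise scaling) are precisely what is unavailable for the circle bundle of Example~\ref{ex:indep}.
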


\begin{proof}
To prove the theorem it will be shown that the homology condition does not hold if the adversary condition does not hold.
Fix an orientation and a Riemannian metric on $\st$ with induced norm $\|\cdot\|$.

Assume that the adversary condition does not hold.
Then there is an arbitrarily small neighborhood $\nbhd\subset \st$ of $\att$ such that, for any $\delta_0 > 0$, there is a continuous vector field $\vh_0$ over $\nbhd$ satisfying  $\|\vh_0\| \leq \delta_0$ and $\vh_0(x)\neq f(\ub_x)$ for all $x\in \nbhd$, $\ub_x\in \ctrl_x$.

Let $\lyap$ be a proper smooth Lyapunov function for $\att$ with respect to $\vt$ and $c> 0$ be small enough that $\lsub= \lyap^{-1}([0,c])\subset \nbhd$.
Since $\nbhd$ was arbitrarily small, the homology condition does not hold if
\begin{equation}\label{eq:no-hc-red-1}
\vt_* \Hom_{n-1}(\lsub\ct \att) \not \subset f_* \Hom_{n-1}(\Sigma_{\lsub \ct \att}) \textnormal{ in } \Hom_{n-1}(\T_{\lsub\ct \att}\st\ct 0_{\T \st}).
\end{equation}

Since 
\begin{equation}\label{eq:def-retr}
\lsub \ct \att  \textnormal{ deformation retracts onto } \partial \lsub
\end{equation}
via the flow of $\vt$ \cite[Thm~3.2]{wilson1967structure}, the homotopy lifting property \cite[Cor.~VII.6.12]{bredon1993topology} implies that
\begin{equation}\label{eq:def-retr-tangent}
\T_{\lsub\ct \att}\st\ct 0_{\T \st}  \textnormal{ deformation retracts onto } \T_{\partial \lsub}\lsub \ct 0_{\T\lsub},
\end{equation}
which in turn deformation retracts onto a compact submanifold, namely, the unit sphere bundle in $\T_{\partial \lsub}\lsub$.
Hence $\Hom_{n-1}(\T_{\lsub\ct \att}\st\ct 0_{\T \st})$ is finitely generated \cite[Cor.~IV.16.5, Cor.~VI.7.12]{bredon1993topology}, so there is a compact set $K\subset \Sigma_{\lsub\ct \att}$ such that
\begin{equation}\label{eq:f-Sig-K-hom}
f_* \Hom_{n-1}(\Sigma_{\lsub\ct \att}) = f_* \Hom_{n-1}(K) \textnormal{ in } \Hom_{n-1}(\T_{\lsub\ct \att}\st\ct 0_{\T \st}).
\end{equation}

Defining
$$\delta \coloneqq \frac{1}{2} \min\{\|f(\ub)\|\colon \ub \in K\} > 0,$$
the second paragraph of the proof implies that there is a continuous vector field $\vh$ on $\lsub$ satisfying  $\|\vh\| \leq \delta$ and $\vh(x)\neq f(\ub_x)$ for all $x\in \lsub$, $\ub_x\in \ctrl_x$. 
Let $\varphi\colon \T \lsub \to [0,1]$ be a  continuous function equal to $0$ on $f(K)$ and $1$ on $\{v\in \T\lsub \colon \|v\|\leq \delta\}$.
Then the fiber-preserving map $\tilde{f}\colon \ctrl_{\lsub}\to \T \lsub $ defined by $\tilde{f}\coloneqq f-(\varphi\circ f)\vh\circ \pi$ satisfies $\tilde{f}|_K = f|_K$ and 
\begin{equation}\label{eq:tf-nonzero}
\tilde{f}^{-1}(0_{\T\lsub}) = \varnothing,
\end{equation}
since $\|(\varphi\circ f)E\circ \pi\|\leq \|E\circ \pi\| \leq  \delta$ implies that $\tilde{f}(\ub_x)=0$ if and only if $f(\ub_x)=\vh(x)$, which never occurs by the definition of $\vh$.

Equation \eqref{eq:f-Sig-K-hom} and the fact that $\tilde{f}|_K = f|_K$ imply that $f_*\Hom_{n-1}(\Sigma_{\lsub\ct \att}) = \tilde{f}_*\Hom_{n-1}(K)$.
Since $K\subset \ctrl_{\lsub\ct \att}$, this and \eqref{eq:tf-nonzero} imply that
\begin{equation}\label{eq:f-tf-hom-inclusion}
f_* \Hom_{n-1}(\Sigma_{\lsub \ct \att})\subset \tilde{f}_*\Hom_{n-1}(\ctrl_{\lsub\ct \att}) \textnormal{ in } \Hom_{n-1}(\T_{\lsub\ct \att}\st\ct 0_{\T \st}).
\end{equation}
If $\ub_0\colon \lsub\to \ctrl_\lsub$ denotes the zero section, then \eqref{eq:def-retr} implies that $\ctrl_{\lsub\ct \att}$ deformation retracts onto $\ctrl_{\partial \lsub}$ \cite[Cor.~VII.6.12]{bredon1993topology}, which in turn deformation retracts onto the image  of $\ub_0|_{\partial \lsub}$.
Defining $\vo\coloneqq \tilde{f}\circ \ub_0$, the preceding sentence and \eqref{eq:f-tf-hom-inclusion} imply \cite[Cor.~IV.16.5]{bredon1993topology} that 
\begin{equation}\label{eq:f-F}
f_*\Hom_{n-1}(\Sigma_{\lsub\ct \att})\subset \vo_* \Hom_{n-1}(\partial \lsub) \textnormal{ in } \Hom_{n-1}(\T_{\lsub\ct \att}\st\ct 0_{\T \st}).
\end{equation}
Since \eqref{eq:def-retr} implies \cite[Cor.~IV.16.5]{bredon1993topology} that
$$ \vt_* \Hom_{n-1}(\lsub\ct \att) = \vt_* \Hom_{n-1}(\partial \lsub) \textnormal{ in } \Hom_{n-1}(\T_{\lsub\ct \att}\st\ct 0_{\T \st}),$$ 
\eqref{eq:f-F} and \eqref{eq:def-retr-tangent} imply that \eqref{eq:no-hc-red-1} holds if 
\begin{equation}\label{eq:th-final-hom}
\vt_* \Hom_{n-1}(\partial \lsub)\not \subset \vo_* \Hom_{n-1}(\partial \lsub) \textnormal{ in } \Hom_{n-1}(\T_{\partial \lsub}\lsub \ct 0_{\T\lsub}). 
\end{equation}
The vector field $\vo$ is nowhere zero by \eqref{eq:tf-nonzero}, $\chi(\lsub) = \chi(\att) \neq 0$ (\S \ref{sec:prelim}), and $\lsub$ is orientable since $\st$ is.
Thus, Lemma~\ref{lem:hv} establishes \eqref{eq:th-final-hom} and hence  also \eqref{eq:no-hc-red-1}. This completes the proof.
\end{proof}

\begin{remark}
The proof of Theorem~\ref{th:vb-hom-implies-adv} relies on orientability of the basin of attraction $\bas$ of $\att$ for $\vt$ (which implies orientability of $\lsub$), rather than orientability of all of $\st$.
Thus, if $\bas$ is known to be a proper subset of $\st$, one can replace the orientability assumption in Theorem~\ref{th:vb-hom-implies-adv} with a weaker assumption that implies orientability of $\bas$.
For example, it would suffice to assume that the complement of some (hence any \cite{michor1994n}) point in $\st$ is orientable.
Note that, if $\st$ is compact, then $\bas$ is always a proper subset of $\st$.
Similarly, if the \v{C}ech cohomology of $\att$ (with coefficients in any abelian group) is not isomorphic to that of $\st$, then again $\bas$ is always a proper subset of $\st$ \cite[Thm~6.3]{gobbino2001topological}. 
\end{remark}

\subsection*{Acknowledgments}
The author thanks Frederick A. Leve for useful discussions, and the organizers for inviting him to the 2023 workshop on Geometry, Topology, and Control System Design at the Banff International Research Station.
This workshop commemorated the late Roger W. Brockett and marked the 50th anniversary of the Conference at {M}ichigan {T}echnological {U}niversity \cite{michigan-tech-1983} where Brockett introduced his necessary condition that led to the results herein.

%
 \bibliographystyle{unsrt}
 \bibliography{ref}
%



%
\end{document}